\g@addto@macro\th@plain{\thm@headpunct{}}
\providecommand{\scalar}[1]{\left\langle#1\right\rangle}
\newcommand{\uhat}{\underaccent{\check}}
\newcommand{\E}{\mathbb{E}}
\newcommand{\R}{\mathbb{R}}
\numberwithin{equation}{section}
\theoremstyle{plain}
\newtheorem{theorem}{Theorem}[section]
\newtheorem{lemma}{Lemma}[section]
\newtheorem{remark}{Remark}[section]
\newtheorem{example}{Example}[section]
\newtheorem{definition}{Definition}[section]
\subjclass[2010]{Primary 60B11; secondary 62E10}
\keywords{natural exponential families, Riesz measure, Wishart distribution, general affine group}
\providecommand{\scalar}[1]{\left\langle#1\right\rangle}
\begin{document}

\title[Riesz Exponential Family on homogeneous cones]{Characterization of the Riesz Exponential Family on homogeneous cones}

\author[H. Ishi]{Hideyuki Ishi}
\author[B. Ko\l{}odziejek]{Bartosz Ko\l{}odziejek}

\address{Nagoya University, Graduate School of Mathematics, Furo-cho, Nagoya 464-8602, Japan \slash 
	{JST PRESTO, 4-1-8 Honcho, Kawaguchi 332-0012, Japan.}}
\email{hideyuki@math.nahoya-u.ac.jp}

\address{Warsaw University of Technology, Faculty of Mathematics and Information Science \\Koszykowa 75, 00-662 Warsaw, Poland}
\email{b.kolodziejek@mini.pw.edu.pl}

\begin{abstract}
In the paper we present a characterization theorem of {the} Riesz measure {and a Wishart exponential family} on homogeneous cones through the invariance property of a natural exponential family under the action of the triangular group.
\end{abstract}

\maketitle

\section{Introduction}

Following Casalis \cite{Cas91}, we consider natural exponential family (NEF), which is invariant under a subgroup $G$ of the general linear group of {a} finite dimensional linear space. NEF is considered on arbitrary finite dimensional linear space $\E$. Under a weak assumption on $G$ (see Theorem \ref{th2}), {the} generating measure of such family is of the form 
$$e^{-\scalar{\theta_0,x}}\mu_0(dx)$$
for some $\theta_0$ in the dual space $\E^\ast$, where $\mu_0$ is a $G$-invariant measure. 

We apply Theorem \ref{th2} to {the} problem of a characterization of the Riesz measure on {a} homogeneous cone through the invariance property of NEF under the action of the triangular group {and we solve this problem in Theorem \ref{TH35}}. Since NEF generated by the Riesz measure consists of Wishart distributions, {Theorem \ref{TH35}} {provides} also a characterization of a Wishart distribution on homogeneous cones \cite{AW04,GI14,LM07}.
We would like to mention that this problem was announced by Letac in \cite[Section 4]{Le89}, who pointed out that the natural framework for such characterizations is indeed homogeneous cone. 

There are essentially two types of characterizations of NEFs. The first one is connected with the central object of NEF, that is, the variance function. The aim is then to describe a generating measure of a NEF with given variance function. Much {has} been done in this direction, but still much more is unknown. A generating measure is known for $\E=\R$, when the variance function is a quadratic polynomial \cite{Mo82}, cubic polynomial \cite{HZ04,LM90} and of some more sophisticated forms, like $P\Delta+Q\sqrt{\Delta}$, where $P$, $\Delta$ and $Q$ are quadratic polynomials \cite{Le92}. For $\E=\R^d$ we have to emphasize \cite{Ca91} with its deep connection between homogeneous quadratic variance functions and Euclidean Jordan algebras, simple quadratic \cite{Ca96} and simple cubic variance functions \cite{HaZa06}, just to name a few.

The second type of characterizations of NEFs {is} through the invariance properties under some group action. We quote some results on invariant NEFs under a given subgroup $G$ of the general affine group: one parameter group \cite{Er84}, group of rotations \cite{Wa83}, {Moebius} group \cite{Je81},
{the identity component $\mathrm{Aut}_0(\mathrm{Sym}_+(N,\R))$ of the linear automorphism group of the cone
	$\mathrm{Sym}_+(N,\R) \subset \mathrm{Sym}(N,\R)$ \cite{Le89},} 
triangular group of a simple Euclidean Jordan algebra \cite{HaLa01} and its modification \cite{HaLa04}. Here $\mathrm{Sym}(N,\R)$ stands for the symmetric $N\times N$ matrices with real entries and $\mathrm{Sym}_+(N,\R)$ is the cone of positive definite real $N\times N$ matrices.
In the last three quoted papers the characterizations were carried out by showing that the variance function (which uniquely determines NEF) coincides with the variance function of some Riesz measure (or its image by the involution $x\mapsto-x$) on $\mathrm{Sym}_+(N,\R)$ {(\cite{Le89})} and symmetric cones {(\cite{HaLa01,HaLa04})}. 

In the present paper we {solve the problem raised by Letac \cite{Le89} on homogeneous cones}. 
We use a matrix realization of homogeneous cones, which proves here very useful and is more accessible to the reader who is not familiar with homogeneous cones, clans and $T$-algebras.
{We would like to emphasize that homogeneous cones are of great importance in statistics despite their theoretical character. 
	Particularly, homogeneous cones appear very naturally in statistics in the context of graphical Gaussian models \cite{LM07}. The aforementioned matrix realization of homogeneous cones allows to describe many of the colored graphical Gaussian models \cite{SS08}. 
	The formula for variance function of $F({\mathcal{R}_{\underline{s}}})$ on homogeneous cones is the topic of our joint paper with Piotr Graczyk \cite{GIK18}.}
At last we would like to point out that this work is closely related to \cite{Sh80,Sh07}, where NEFs and information theory are treated on homogeneous Hessian structures.

The paper is organized as follows. The concept of natural exponential families is introduced in the next section. In Section \ref{Sec52} we give an application of Theorem \ref{th2} to a characterization of the Riesz measure on homogeneous cone {(Theorem \ref{TH35})}. A crucial role in the proof of characterization is played by a matrix realization of any homogeneous cone (\cite{Hi06}), which is explained in Section \ref{Sec51}.
Section \ref{Sec5} {ends with} some comments.

\section{Natural Exponential Families}
In {this} section we will introduce all the necessary facts on NEFs that will be needed later on. It is however worth mentioning that the standard reference book on exponential families is \cite{BN14}.

Let $\E$ be a finite dimensional real linear space 
and $\E^\ast$ its dual space. 
{The coupling of $\theta \in E^*$ and $x \in E$ is denoted by 
	$\scalar{\theta,x}$.}
Let $\mu$ be a positive Radon measure on $\E$. We define its Laplace transform $L_\mu\colon \E^\ast\to(0,\infty]$ by
$$L_\mu(\theta)=\int_{\E} e^{\scalar{\theta,x}}\mu(dx).$$
By $\Theta(\mu)$ we will denote the interior of the set $\{\theta\in \E^\ast\colon L_\mu(\theta)<\infty\}$. H\"older's inequality implies that the set $\Theta(\mu)$ is convex and the cumulant function
$$k_\mu(\theta)=\log L_\mu(\theta)$$
is convex on $\Theta(\mu)$ and it is strictly convex if and only if $\mu$ is not concentrated on some affine hyperplane of $\E$.
Let $\mathcal{M}(\E)$ be the set of positive Radon measures on $\E$ such that $\Theta(\mu)$ is not empty and $\mu$ is not concentrated on some affine hyperplane of $\E$. 

For $\mu\in\mathcal{M}(\E)$ we define the \emph{natural exponential family (NEF) generated by $\mu$} (denoted by $F(\mu)$) as the set of probability measures of the form
$$P(\theta,\mu)(dx)=e^{\scalar{\theta,x}-k_\mu(\theta)}\mu(dx),\quad \theta\in\Theta(\mu).$$

Let us note that $F(\mu)=F(\mu')$ if and only if 
$\mu'(dx)=e^{\scalar{a,x}+b}\mu(dx)$ for some $a\in\E^\ast$ and $b\in\R$.

We will now describe the action of elements from the general linear group $GL(\E)$ on a NEF. The identity element of $GL(\E)$ will be denoted by $\mathrm{Id}$. 
Let $F=F(\mu)$ be a NEF on $\E$. {Let $g_\ast\mu$ denote the image measure of $\mu$ by $g$ and let $g.F(\mu)$ stand for the family of image measures 
	$g_\ast P(\theta,\mu)\,\,\,(\theta \in \Theta(\mu))$.}
Then, for any $g\in GL(\E)$, we have ${g.F(\mu)}=F(g_\ast\mu)$.

We say that a measure $\mu_0$ is \emph{invariant under a subgroup $G$} of $GL(\E)$ if for all $g\in G$ there exists a constant $c_g>0$ for which $\mu_0(gA)=c_g\mu_0(A)$ for any measurable set $A\subset \E$. This condition is equivalent to
\begin{align}\label{iL}
	L_{\mu_0}(g^\ast\theta)=c_g^{-1}L_{\mu_0}(\theta),\quad \theta\in \Theta(\mu_0).
\end{align}
{Note that the correspondence $G\owns g \mapsto c_g \in \R$ is a character of the group $G$.}

Let $\mu\in\mathcal{M}(\E)$. Observe that the condition $g.F(\mu)=F(\mu)$ implies that for any $g\in G$ there exists $a(g)\in\Theta(\mu)\subset \E^\ast$ and $b(g)\in\R$ such that
\begin{align}\label{eqq}
	g_\ast\mu(dx)=e^{\scalar{a(g),x}+b(g)}\mu(dx).
\end{align}
Then, Casalis \cite{Cas91} showed that (see \cite[Theorem 2.2]{Cas91}) functions $a$ and $b$ satisfy the following system of equations: for any $(g,g')\in G^2$ 
\begin{align*}
	a(g g')&=(g^\ast)^{-1}a(g')+a(g), \\
	b(g g')&=b(g)+b(g').
\end{align*}
Let us assume that $G$ contains $c \mathrm{Id}$ for some $c\neq 1$. Then, we obtain for any $g\in G$,
\begin{align*}
	a(c\mathrm{Id}\, g)&=\frac{1}{c}a(g)+a(c\mathrm{Id}), \\
	a(g\, c\mathrm{Id})&=(g^\ast)^{-1}a(c\mathrm{Id})+a(g). \\
\end{align*}
Equating the right hand sides of above formulas, we arrive at
$$a(g)=\theta_0-(g^\ast)^{-1}\theta_0,\qquad g\in G,$$
with $\theta_0=\frac{c}{1-c}{a}(c\mathrm{Id})$.
Define $\mu_0(dx)=e^{\scalar{\theta_0,x}}\mu(dx)$. Then, \eqref{eqq} implies that $\mu_0$ is $G$-invariant. 
Thus, we obtain the following
\begin{theorem}\label{th2}
	Let $G$ be a subgroup of $GL(\E)$ and let $F=F(\mu)$ be a $G$-invariant NEF on $\E$, that is, for any $g\in G$ one has $g{.}F=F$. If $G$ contains $c \mathrm{Id}$ for some $c\neq 1$, then there exist $\theta_0\in \E^\ast$ and a $G$-invariant measure $\mu_0$ such that
	$$\mu(dx)=e^{-\scalar{\theta_0,x}}\mu_0(dx).$$
	{In such case we have also $F=F(\mu_0)$.}
\end{theorem}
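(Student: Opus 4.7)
The plan is to extract from $G$-invariance of the family a functional equation on $G$ with values in $\E^\ast\times\R$, and then use the hypothesis $c\mathrm{Id}\in G$ to solve it explicitly.

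First I would translate the relation $g.F(\mu)=F(\mu)$ into an equality between measures. Since two measures generate the same NEF if and only if they differ by an exponential-affine Radon--Nikodym density, $G$-invariance gives, for each $g\in G$, functions $a(g)\in\E^\ast$ and $b(g)\in\R$ with
$$g_\ast\mu(dx)=e^{\scalar{a(g),x}+b(g)}\,\mu(dx).$$
(Uniqueness of $(a(g),b(g))$ follows from the assumption that $\mu$ is not concentrated on an affine hyperplane.)

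Next I would derive the cocycle equations by computing $(gg')_\ast\mu$ in two ways. Applying the definition directly yields $e^{\scalar{a(gg'),x}+b(gg')}\mu$, while first pushing by $g'$ and then by $g$ produces $e^{\scalar{a(g'),g^{-1}x}+b(g')}e^{\scalar{a(g),x}+b(g)}\mu$; using $\scalar{a(g'),g^{-1}x}=\scalar{(g^\ast)^{-1}a(g'),x}$ and identifying coefficients of the linear term and the constant term gives the cocycle identities
\begin{align*}
a(gg')&=(g^\ast)^{-1}a(g')+a(g),\\
b(gg')&=b(g)+b(g'),
\end{align*}
which are precisely those quoted from Casalis.

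Then I would exploit the existence of the central element $c\mathrm{Id}\in G$ with $c\neq 1$. Expanding $a(c\mathrm{Id}\cdot g)=a(g\cdot c\mathrm{Id})$ using the cocycle and noting $(c\mathrm{Id})^\ast=c\mathrm{Id}$, one finds
$$\tfrac{1}{c}a(g)+a(c\mathrm{Id})=(g^\ast)^{-1}a(c\mathrm{Id})+a(g),$$
which rearranges to $a(g)=\theta_0-(g^\ast)^{-1}\theta_0$ with $\theta_0:=\frac{c}{1-c}a(c\mathrm{Id})$; here $c\neq 1$ is essential for well-definedness.

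Finally I would define $\mu_0(dx):=e^{\scalar{\theta_0,x}}\mu(dx)$ and verify $G$-invariance directly: for $g\in G$,
$$g_\ast\mu_0(dx)=e^{\scalar{\theta_0,g^{-1}x}}g_\ast\mu(dx)=e^{\scalar{(g^\ast)^{-1}\theta_0,x}+\scalar{a(g),x}+b(g)}\mu(dx)=e^{b(g)}e^{\scalar{\theta_0,x}}\mu(dx)=e^{b(g)}\mu_0(dx),$$
using the explicit form of $a(g)$ to cancel the linear term. This is exactly the invariance condition with character $c_g=e^{-b(g)}$, and inverting the defining relation gives $\mu(dx)=e^{-\scalar{\theta_0,x}}\mu_0(dx)$. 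The only non-routine point is the book-keeping with adjoints in deriving the cocycle and checking the cancellation in the last display; everything else is formal manipulation.
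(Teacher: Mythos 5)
Your proposal is correct and follows essentially the same route as the paper: translate $g.F(\mu)=F(\mu)$ into the exponential-affine relation $g_\ast\mu=e^{\scalar{a(g),x}+b(g)}\mu$, invoke (here: rederive) the Casalis cocycle identities, use the central element $c\,\mathrm{Id}$ to solve $a(g)=\theta_0-(g^\ast)^{-1}\theta_0$, and set $\mu_0=e^{\scalar{\theta_0,x}}\mu$. The only quibble is the sign of the constant (solving the equation actually gives $\theta_0=\tfrac{c}{c-1}a(c\,\mathrm{Id})$, not $\tfrac{c}{1-c}a(c\,\mathrm{Id})$), a harmless slip that the paper itself shares and that is absorbed by replacing $\theta_0$ with $-\theta_0$.
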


\section{Characterization of the Riesz measure on homogeneous cones}\label{Sec5}
\subsection{Matrix realization of homogeneous cones}\label{Sec51}
Let $V$ be a real linear space and $\Omega$ a regular open convex set in $V$ containing no line. The cone $\Omega$ is said to be \emph{homogeneous} if the linear automorphism group $G(\Omega)=\{g\in GL(V)\colon g\Omega=\Omega \}$ acts transitively on $\Omega$, that is, for any $x$ and $y$ in $\Omega$ there exists $g\in G(\Omega)$ such that $y=gx$.

We will now give a very useful representation of homogeneous cones following \cite[Section 3]{Is14}.
	For a symmetric matrix $x\in\mathrm{Sym}(N,\R)$, we denote by $\uhat{x}$ the lower triangular matrix of size $N$ defined by
	\begin{align*}
		(\uhat{x})_{ij}=\begin{cases}
			x_{ij}, & \mbox{if }i>j,\\
			x_{ii}/2,&\mbox{if }i=j,\\
			0,&\mbox{if }i<j.
		\end{cases}
	\end{align*}
	Then we have $x=\uhat{x}+\hat{x}$, where $\hat{x}=\uhat{x}^\top$ is the transpose of $\uhat{x}$. For $x,y\in\mathrm{Sym}(N,\R)$, we define
	$$x\bigtriangleup y:=\uhat{x}y+y\hat{x}\in\mathrm{Sym}(N,\R).$$
	Then, $(\mathrm{Sym}(N,\R),\bigtriangleup)$ forms a non-associative algebra with unit element $I_N$. Let $\mathcal{Z}$ be a subalgebra of $(\mathrm{Sym}(N,\R),\bigtriangleup)$ and $H_\mathcal{Z}$ be the set of lower triangular matrices from {elements of }$\mathcal{Z}$ with positive diagonal entries, that is, $H_{\mathcal{Z}}:=\{ \uhat{x}\colon x\in\mathcal{Z}\mbox{ and }x_{ii}>0\}$.\\
	Define $\Omega_\mathcal{Z}=\{x\in \mathcal{Z}\colon {x}\mbox{ is positive definite}\}$ and consider for any $T\in H_{\mathcal{Z}}$ the linear operators $\rho(T)\colon \mathcal{Z}\to\mathcal{Z}, x\mapsto \rho(T)x=TxT^\top$. It can be shown that $\rho(H_{\mathcal{Z}})$ acts on $\Omega_\mathcal{Z}$ transitively, which means that $\Omega_\mathcal{Z}$ is a homogeneous cone (\cite[Theorem 3]{Is15}). Furthermore, we have the following
	\begin{theorem}[\cite{Is11}]\label{TH31}
		For a homogeneous cone $\Omega\subset V$, there exists a subalgebra $\mathcal{Z}\subset\mathrm{Sym}(N,\R)$ and a linear isomorphism $\phi\colon V\to\mathcal{Z}$ such that $\phi(\Omega)=\Omega_{\mathcal{Z}}$.
\end{theorem}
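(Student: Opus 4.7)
The plan is to follow Vinberg's theory of clans (left-symmetric algebras) and to realize the intrinsic clan structure of $\Omega$ as a subalgebra of $(\mathrm{Sym}(N,\R),\bigtriangleup)$. By Vinberg's fundamental theorem, there is a connected split solvable Lie subgroup $H$ of $G(\Omega)$ acting simply transitively on $\Omega$. Fix a base point $E \in \Omega$. The orbit map $H \to \Omega$, $h\mapsto hE$, gives at the level of Lie algebras an identification $V \cong \mathfrak{h}$, which endows $V$ with a bilinear product (the clan product, which I will also denote by $\bigtriangleup_V$) for which $E$ is a right unit and such that the operators of left multiplication are triangularizable with real eigenvalues.

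Next, I would invoke Vinberg's structural decomposition. The unit $E$ decomposes into a sum $E=c_1+\dots+c_N$ of pairwise orthogonal primitive idempotents (where $N$ is the rank of $\Omega$), and $V$ splits as
\begin{equation*}
V=\bigoplus_{1\le i\le j\le N}V_{ij},\qquad V_{ii}=\R\,c_i,
\end{equation*}
with multiplication rules of the form $V_{ij}\bigtriangleup_V V_{jk}\subset V_{ik}$ (and the other products either lying in the same graded pieces or vanishing), matching precisely the combinatorics of block multiplication. This is the key input that makes a matrix realization even conceivable.

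The construction of $\phi$ would then proceed block by block: to each $v\in V_{ij}$ with $i<j$ I would associate a symmetric matrix whose $(i,j)$ and $(j,i)$ blocks encode $v$ (and which vanishes off these blocks), and to $c_i$ I would assign the diagonal matrix unit $E_{ii}$. Set $\mathcal Z:=\phi(V)\subset \mathrm{Sym}(N,\R)$. The heart of the proof is the verification, using Vinberg's structure constants, that
\begin{equation*}
\phi(x\bigtriangleup_V y)=\uhat{\phi(x)}\,\phi(y)+\phi(y)\,\widehat{\phi(x)}=\phi(x)\bigtriangleup\phi(y),
\end{equation*}
so that $\mathcal Z$ is a subalgebra of $(\mathrm{Sym}(N,\R),\bigtriangleup)$. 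The factor $1/2$ in the definition of $\uhat{\cdot}$ on the diagonal, together with the normalization of $\phi$ on each $V_{ij}$, must be calibrated so that this identity holds on every pair of graded pieces; this calibration is the main obstacle and is essentially a bookkeeping exercise in the clan axioms.

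Finally, to identify the cones, observe that $\phi(E)=I_N$, which lies in $\Omega_{\mathcal Z}$. The Lie algebra isomorphism $V\cong\mathfrak h$ transports the exponential parametrization $\Omega=\{hE:h\in H\}$ to a parametrization of $\phi(\Omega)$ by the group $\rho(H_{\mathcal Z})$ acting on $I_N$, using that differentiating $T\mapsto TT^\top$ at $T=I_N$ yields precisely the map $\uhat{x}+\hat{x}=x$. Since $\rho(H_{\mathcal Z})$ acts simply transitively on $\Omega_{\mathcal Z}$ (by the theorem cited from \cite{Is15}) and $H$ acts simply transitively on $\Omega$, the two orbits through $I_N$ coincide, giving $\phi(\Omega)=\Omega_{\mathcal Z}$. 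The linearity and injectivity of $\phi$ are built into the construction, so $\phi$ is a linear isomorphism $V\to\mathcal Z$ with the required property.
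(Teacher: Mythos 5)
The paper does not prove this statement at all: it is quoted from \cite{Is11} (with the underlying construction going back to \cite{Hi06}), so there is no internal proof to compare with; judging your sketch on its own, it contains a genuine gap at its central step. You take the matrix size to equal the rank: $E=c_1+\dots+c_N$ with $N$ the rank, $c_i\mapsto E_{ii}$, and each off-diagonal clan component $V_{ij}$ "encoded" in the $(i,j)$ and $(j,i)$ entries of an $N\times N$ symmetric matrix. This cannot work in general, already by dimension count: the Lorentz cone of dimension $n\ge 4$ has rank $2$, but $\mathrm{Sym}(2,\R)$ is $3$-dimensional, so no linear isomorphism onto a subspace of $\mathrm{Sym}(2,\R)$ exists. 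Its actual realization in the sense of the paper uses $N=n-1$ with diagonal blocks $x_{11}I_{n-2}$ and $x_{22}$, and $\mathcal{V}_{21}=\mathrm{Mat}(1,n-2,\R)$; similarly $\mathrm{Herm}_+(n,\C)$, where $\dim V_{ij}=2$, cannot be encoded entry-by-entry in $\mathrm{Sym}(n,\R)$. The point is that in a normal block decomposition the block sizes $n_1,\dots,n_r$ are \emph{not} determined by the rank grading of the clan; they are extra representation-theoretic data, and the whole content of the theorem is that suitable sizes and subspaces $\mathcal{V}_{lk}\subset\mathrm{Mat}(n_l,n_k,\R)$ satisfying (V1)--(V3) can always be found.

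Relatedly, your remark that verifying $\phi(x\bigtriangleup_V y)=\phi(x)\bigtriangleup\phi(y)$ is "essentially a bookkeeping exercise in the clan axioms" understates where the difficulty lies. Condition (V3), $AA^\top\in\R I_{n_l}$, together with (V1)--(V2), amounts to producing norm-compatible matrix representations of the off-diagonal spaces $V_{lk}$ (a Clifford-module/composition-type condition); the abstract clan axioms give you bilinear structure constants and a quadratic form on each $V_{lk}$, but they do not hand you such matrix models, and constructing them is the heart of Ishi's proof (an inductive construction of self-adjoint representations of the clan, rank by rank, closely tied to the symplectic representations of normal $j$-algebras in \cite{Hi06}). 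Your first and last steps (Vinberg's simply transitive split solvable group, the clan grading, and the identification of the two simply transitive orbits through $\phi(E)=I_N$ at the end) are fine, but without the representation-theoretic construction of the blocks the argument does not go through.
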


We say that $\mathcal{Z}\subset\mathrm{Sym}(N,\R)$ admits a normal block decomposition if there exists a partition $N=n_1+\ldots+n_r$ and subspaces $\mathcal{V}_{lk}\subset\mathrm{Mat}(n_l,n_k,\R)$, $1\leq k<l\leq r$, such that $\mathcal{Z}$ is the set of symmetric matrices of the form
\begin{align*}
	\begin{pmatrix}
		X_{11} & X_{21}^\top & \cdots & X_{r1}^\top \\
		X_{21} & X_{22} & & X_{r2}^\top \\
		\vdots & & \ddots & \\
		X_{r1} & X_{r2} & & X_{rr}
	\end{pmatrix}
	\qquad 
	\begin{pmatrix}
		X_{ll}=x_{ll}I_{n_l},\,\, x_{ll}\in\R,\,\,1\leq l\leq r \\
		X_{lk}\in \mathcal{V}_{lk}, \,\,1\leq k<l\leq r 
	\end{pmatrix}.
\end{align*}
We will write $\mathcal{Z}_\mathcal{V}$ for this space.
\begin{theorem}
	\begin{itemize}
		\item[(i)] (\cite[Theorem 2]{Is15})
		Let $\mathcal{Z}$ be a subalgebra of $(\mathrm{Sym}(N,\R),\bigtriangleup)$  with $I_N\in\mathcal{Z}$. Then there exists a permutation matrix $w$, such that $w\mathcal{Z}w^\top$ admits a normal block decomposition.
		\item[(ii)] (\cite[Proposition 2]{Is15})
		$\mathcal{Z}_\mathcal{V}$ is a subalgebra of $(\mathrm{Sym}(N,\R),\bigtriangleup)$ if and only if the subspaces $\{\mathcal{V}_{lk}\}_{1\leq k<l\leq r}$ satisfy the following conditions:
		\begin{itemize}
			\item[{\rm (V1)}] $A\in\mathcal{V}_{lk}$, $B\in\mathcal{V}_{ki}$ $\implies$ $AB\in\mathcal{V}_{li}$ for any $1\leq i<k<l\leq r$,
			\item[{\rm (V2)}] $A\in\mathcal{V}_{li}$, $B\in\mathcal{V}_{ki}$ $\implies$ $AB^\top\in\mathcal{V}_{lk}$ for any $1\leq i<k<l\leq r$,
			\item[{\rm (V3)}] $A\in\mathcal{V}_{lk}$ $\implies$ $AA^\top\in\R I_{n_l}$ for any $1\leq k<l\leq r$.
		\end{itemize}
	\end{itemize}
\end{theorem}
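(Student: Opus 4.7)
The two parts have very different flavors, and I would tackle them separately. For part (i), my plan is to extract the block structure from the diagonal idempotents of $\mathcal{Z}$. The key observation is that if $d\in\mathrm{Sym}(N,\R)$ is diagonal then $\uhat{d}=\hat{d}=d/2$, so $d\bigtriangleup x=(dx+xd)/2$ for any $x$, and in particular $d\bigtriangleup d'=dd'$ whenever both are diagonal. Consequently the set $\mathcal{D}$ of diagonal matrices lying in $\mathcal{Z}$ is closed under ordinary matrix multiplication, contains $I_N$, and is thus a finite-dimensional unital commutative subalgebra of $\R^N$. Since every idempotent of $\R^N$ takes values in $\{0,1\}$ componentwise, the primitive idempotents of $\mathcal{D}$ are indicator matrices $e_1,\ldots,e_r$ of a partition $S_1,\ldots,S_r$ of $\{1,\ldots,N\}$, summing to $I_N$. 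Choose a permutation $w$ placing each $S_l$ into a consecutive block of size $n_l:=|S_l|$; after conjugation, each $e_l$ becomes the block-diagonal matrix with $I_{n_l}$ in block $l$. To finish (i), one verifies that the operators $L_{e_l}\colon x\mapsto e_l\bigtriangleup x$ act as projections onto the row/column-$l$ slice of $x$ and that iterated compositions $L_{e_l}\circ L_{e_k}$ isolate the pure $(l,k)/(k,l)$ off-diagonal pair; all such slices lie in $\mathcal{Z}$ since $e_l\in\mathcal{D}\subset\mathcal{Z}$. The requirement that diagonal blocks be scalar follows from $\mathcal{D}=\mathrm{span}\{e_1,\ldots,e_r\}$, which forces the only diagonals available in block $l$ to be multiples of $I_{n_l}$.

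For part (ii), the plan is a direct block-matrix computation of $x\bigtriangleup y=\uhat{x}y+y\hat{x}$. To isolate each axiom, one tests with elements having a single non-zero off-diagonal block. Taking $x$ with $X_{lk}=A$ only and $y$ with $Y_{ki}=B$ only (for $1\leq i<k<l\leq r$) yields $(x\bigtriangleup y)_{li}=AB$, which must lie in $\mathcal{V}_{li}$: this is (V1). Taking $x$ with $X_{li}=A$ and $y$ with $Y_{ki}=B$ yields $(x\bigtriangleup y)_{lk}=AB^\top$, giving (V2). Taking $x=y$ with $X_{lk}=A$ alone yields $(x\bigtriangleup x)_{ll}=2AA^\top$, which must be a scalar multiple of $I_{n_l}$, giving (V3). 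For the converse, the full block formula for $x\bigtriangleup y$ at a generic pair $x,y\in\mathcal{Z}_{\mathcal{V}}$ decomposes into a sum of expressions of exactly the forms covered by (V1), (V2), and the polarization of (V3), so these three conditions suffice to keep the product inside $\mathcal{Z}_{\mathcal{V}}$.

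The genuinely hard step is (i). Once the partition and permutation are produced, recovering the block-level description is fairly mechanical, but producing them is where the hypothesis $I_N\in\mathcal{Z}$ is essential: the identity guarantees $\mathcal{D}$ is unital, forcing it to split into a complete system of orthogonal indicator idempotents. Part (ii), by contrast, is a bookkeeping exercise once the block multiplication of generic elements is written out carefully, and in practice one simply extracts (V1)--(V3) as the obstructions to closure under $\bigtriangleup$.
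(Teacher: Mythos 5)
First, a remark: the paper does not prove this statement at all --- both parts are quoted from \cite[Theorem 2, Proposition 2]{Is15}, so there is no internal proof to compare against. Your part (ii) is the standard verification and is essentially correct: testing $\bigtriangleup$ on elements with a single nonzero off-diagonal block gives (V1)--(V3) as necessary (with the coefficients you state, e.g. $(x\bigtriangleup x)_{ll}=2AA^\top$), and for sufficiency the block expansion of $\uhat{x}y+y\hat{x}$ produces only terms of the forms $X_{lj}Y_{jk}$, $X_{lj}Y_{kj}^\top$, $Y_{lj}X_{kj}^\top$, scalar multiples of $Y_{lk}$, and, on the diagonal, $\sum_{j<l}\bigl(X_{lj}Y_{lj}^\top+Y_{lj}X_{lj}^\top\bigr)+x_{ll}y_{ll}I_{n_l}$, which are handled by (V1), (V2) and the polarization of (V3).

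Part (i) is where the real content lies, and your sketch has two genuine gaps. (a) Scalar diagonal blocks: from $\mathcal{D}=\mathrm{span}\{e_1,\ldots,e_r\}$ you only know that the \emph{diagonal matrices belonging to} $\mathcal{Z}$ are spanned by the $e_l$; this says nothing a priori about the $(l,l)$-blocks of arbitrary elements of $\mathcal{Z}$. Your Peirce-type decomposition via the operators $L_{e_l}$ does give $\mathcal{Z}=\bigoplus_{k\le l}\mathcal{Z}_{lk}$ with $\mathcal{Z}_{ll}$ consisting of symmetric matrices supported on the $(l,l)$ block, but the normal decomposition requires $\mathcal{Z}_{ll}=\R e_l$, i.e. that $e_l$ is primitive as an idempotent of the whole algebra $(\mathcal{Z},\bigtriangleup)$, not merely of $\mathcal{D}$; a nonscalar element of $\mathcal{Z}_{ll}$ need not be diagonal, so it is not excluded by your span argument. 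Showing $\dim\mathcal{Z}_{ll}=1$ (equivalently, producing a new diagonal element of $\mathcal{Z}$ from a nonscalar $(l,l)$-block, or invoking the clan structure theory) is precisely the nontrivial step, and it is only asserted. (b) The permutation: conjugation by $w$ does not commute with $x\mapsto\uhat{x}$, so it is not automatic that $w\mathcal{Z}w^\top$ is closed under the (new) product $\bigtriangleup$, and the order in which the sets $S_l$ are listed matters. For instance, (V3) forces $n_l\le n_k$ whenever $\mathcal{V}_{lk}\ne\{0\}$ with $l>k$ (a rank count on $AA^\top\in\R I_{n_l}$), so an arbitrary ordering of the blocks will in general violate (V1)--(V3); moreover, if the $S_l$ interlace in the original index order, the truncation $\uhat{x}$ cuts the $(l,k)$ blocks into pieces, and closure after reordering needs an argument. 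Producing and justifying a compatible ordering is part of the theorem, not bookkeeping, and your proposal treats it as automatic.
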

If $\mathcal{Z}=\mathcal{Z}_\mathcal{V}$ we shall write $\Omega_\mathcal{V}$, $H_\mathcal{V}$ for $\Omega_{\mathcal{Z}}$ and $H_{\mathcal{Z}}$, respectively.

Condition $(V3)$ allows us to define an inner product on $\mathcal{V}_{lk}$, $1\leq k<l\leq r$, by
$$AA^\top=(A|A)I_{n_l},\qquad A\in\mathcal{V}_{lk}.$$
We then define the \emph{standard inner product} on $\mathcal{Z}_\mathcal{V}$ by
$$\scalar{x,y}:=\sum_{k=1}^r x_{kk}y_{kk}+2\sum_{1\leq k<l\leq r} (X_{lk}|Y_{lk}),\qquad x,y\in\mathcal{Z}_\mathcal{V}.$$
{We identify $\mathcal{Z}_\mathcal{V}^\ast$ with $\mathcal{Z}_\mathcal{V}$ using the standard inner product.} Note that $\scalar{\cdot,\cdot}$ coincides with the trace inner product only if $n_1=\ldots=n_r=1$.

Define a one-dimensional representation of $H_{\mathcal{V}}$ by
$$\chi_{\underline{s}}(T):=\prod_{k=1}^r t_{kk}^{2 s_k},$$ 
where $\underline{s}=(s_1,\ldots,s_r)\in\mathbb{C}^r$. 
{We have (see e.g. \cite[Lemma 2.4]{GG01})}
\begin{lemma}\label{lemREP}
	Let $\chi$ be a one-dimensional representation of $H_{\mathcal{V}}$. Then, there exists $\underline{s}\in\mathbb{C}^r$ such that
	$$\chi=\chi_{\underline{s}}.$$
\end{lemma}
This fact will be important later on.

For any open convex cone $\Omega$ we define the \emph{dual cone} of $\Omega$ by
$$\Omega^\ast=\{\xi\in V^\ast\colon \scalar{{\xi,}x}>0 \,\,\,\forall x\in \bar{\Omega}\setminus\{0\}\},$$
where $V^\ast$ is the dual space of $V$. If $\Omega$ is homogeneous, then so is $\Omega^\ast$. Let $\Omega_\mathcal{V}^\ast$ denote the dual cone of $\Omega_\mathcal{V}$. 

For $T \in H_{\mathcal{V}}$, we denote by $\rho^\ast(T)$ the adjoint operator of $\rho(T) \in GL(\mathcal{Z}_{\mathcal{V}})$
defined in such a way that
$\scalar{\rho^\ast(T)\xi{, x}} = \scalar{{\xi, }\rho(T)x}$ for $x,\,\xi \in \mathcal{Z}_{\mathcal{V}}$.
Then we see from \cite[Chapter 1, Proposition 9]{Vi63} that for any $\xi\in \Omega_\mathcal{V}^\ast$, there exists a unique $T\in H_\mathcal{V}$ such that $\xi=\rho^\ast(T)I_N$. 
\begin{definition}
	Let $\Delta^\ast_{\underline{s}}\colon \Omega_\mathcal{V}^\ast\to\mathbb{C}$ be the function given by
	$$\Delta^\ast_{\underline{s}}(\xi)=\Delta^\ast_{\underline{s}}(\rho^\ast(T)I_N):=\chi_{\underline{s}^\ast}(T),$$
	where $\underline{s}^\ast=(s_r,\ldots,s_1)\in\mathbb{C}^n$.
\end{definition}

\begin{example}
	Let
	$$\mathcal{Z}_\mathcal{V}:=\left\{ \begin{pmatrix}x_{11} & 0 & x_{31} \\ 0 & x_{22} & x_{32} \\ x_{31} & x_{32} & x_{33} \end{pmatrix}\colon x_{11},x_{22},x_{33},x_{31},x_{32}\in\R\right\}.$$
	Conditions {\rm (V1)--(V3)} are satisfied and we have $n_1=n_2=n_3=1$, $N=r=3$.
	Then,
	$$\Omega_\mathcal{V}=\mathcal{Z}_\mathcal{V}\cap \mathrm{Sym}_+(3,\R)=\left\{ x\in\mathcal{Z}_\mathcal{V}\colon x_{11}>0,\,x_{22}>0,\,\det x>0\in\R\right\}$$
	and its dual cone is given by 
	$$\Omega_\mathcal{V}^\ast=\left\{ \xi\in \mathcal{Z}_\mathcal{V} \colon \xi_{33}>0,\,\xi_{11}\xi_{33}>\xi_{31}^2,\,\xi_{22}\xi_{33}>\xi_{32}^2\right\}.$$ 
	The cone $\Omega_\mathcal{V}^\ast$
	is called the Vinberg cone, while $\Omega_\mathcal{V}$ is called the dual Vinberg cone.
	The cones $\Omega_\mathcal{V}^\ast$ and $\Omega_\mathcal{V}$  are the lowest dimensional non-symmetric homogeneous cones.
	Moreover, for $\xi\in \Omega_\mathcal{V}^\ast$, we have 
	$$\Delta^\ast_{\underline{s}}(\xi)=\left(\frac{\xi_{11}\xi_{33}-\xi_{31}^2}{\xi_{33}} \right)^{s_3}\left(\frac{\xi_{22}\xi_{33}-\xi_{32}^2}{\xi_{33}} \right)^{s_2}\xi_{33}^{s_1}.$$
\end{example}

We see that for any $S\in H_\mathcal{V}$ and $\xi=\rho^\ast(T)I_N\in\Omega^\ast_{\mathcal{V}}$ we have
\begin{align}\label{propDelta}
	\Delta^\ast_{\underline{s}}(\rho^\ast(S)\xi)=\chi_{\underline{s}^\ast}(TS)=\chi_{\underline{s}^\ast}(T)\chi_{\underline{s}^\ast}(S)=\Delta^\ast_{\underline{s}}(\xi)\Delta^\ast_{\underline{s}}(\rho^\ast(S)I_N)
\end{align}
and, since any character of $H_\mathcal{V}$ is of the form $\chi_{\underline{s}^\ast}$, property {\eqref{propDelta}} characterizes  $\Delta^\ast$ {(see \cite{BK15})}. Function $\Delta^\ast$ is sometimes termed a generalized power function.
Its importance is emphasized by the following result \cite{Gi75,Is00}.
\begin{theorem}\label{riesz}
	There exists a positive measure $\mathcal{R}_{\underline{s}}$ on $\mathcal{Z}_\mathcal{V}$ with the Laplace transform $L_{\mathcal{R}_{\underline{s}}}(-\theta)=\Delta^\ast_{-\underline{s}^\ast}(\theta)$ for $\theta\in\Omega_\mathcal{V}^\ast$ if and only if $\underline{s}\in \Xi:=\bigsqcup_{\underline{\varepsilon}\in\{0,1\}^r} \Xi(\underline{\varepsilon})$,
	where
	$$\Xi(\underline{\varepsilon}):=\begin{Bmatrix}
	& s_k>p_k(\underline{\varepsilon})/2\mbox{ if } \varepsilon_k=1 \\
	\underline{s}\in\R^r; & \\
	& s_k=p_k(\underline{\varepsilon})/2\mbox{ if } \varepsilon_k=0
	\end{Bmatrix}$$
	and $p_k(\underline{\varepsilon})\:=\sum_{i<k} \varepsilon_i\dim\mathcal{V}_{ki}$.
\end{theorem}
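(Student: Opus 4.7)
The plan is to treat sufficiency and necessity separately, with sufficiency split along the stratification $\Xi=\bigsqcup_{\underline{\varepsilon}}\Xi(\underline{\varepsilon})$ of the Gindikin set. The workhorse on both sides is the triangular parametrization $T\mapsto \rho(T)I_N = TT^\top$ of $\Omega_\mathcal{V}$ by $H_\mathcal{V}$, together with the characterizing functional equation $\Delta^\ast_{-\underline{s}^\ast}(\rho^\ast(S)\xi)=\chi_{-\underline{s}}(S)\,\Delta^\ast_{-\underline{s}^\ast}(\xi)$.

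For sufficiency on the open stratum $\underline{\varepsilon}=(1,\ldots,1)$, where $s_k>p_k(\underline{1})/2$ for all $k$, I propose the absolutely continuous candidate $\mathcal{R}_{\underline{s}}(dx)=C_{\underline{s}}\,\Delta_{\underline{s}-\underline{d}/2}(x)\,\mathbf{1}_{\Omega_\mathcal{V}}(x)\,dx$, with $\Delta_{\underline{s}}$ the generalized power attached to $\Omega_\mathcal{V}$ and $\underline{d}$ the integer shift encoding the dimensions $\dim\mathcal{V}_{lk}$ that arise as Jacobian exponents. To test its Laplace transform at $\theta=\rho^\ast(T_0)I_N\in\Omega^\ast_\mathcal{V}$, I make the change of variables $x=\rho(T)I_N$ for $T\in H_\mathcal{V}$; the $H_\mathcal{V}$-equivariance of $\rho$ and $\rho^\ast$ separates the integrand into a product over $k=1,\ldots,r$ of one-dimensional Gamma integrals in the diagonal coordinates $t_{kk}$ and Gaussian-type integrals over the blocks $\mathcal{V}_{lk}$. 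Convergence of the $k$-th Gamma integral is precisely $s_k>p_k(\underline{1})/2$, and the evaluated product reproduces $\chi_{-\underline{s}}(T_0)=\Delta^\ast_{-\underline{s}^\ast}(\theta)$ up to a constant fixed by $C_{\underline{s}}$. For a non-trivial $\underline{\varepsilon}$, the subset of indices with $\varepsilon_k=1$ determines a sub-family $\mathcal{V}^{\underline{\varepsilon}}$ still satisfying (V1)--(V3), hence a boundary face $\Omega_{\mathcal{V}^{\underline{\varepsilon}}}\subset \bar\Omega_\mathcal{V}$; running the open-stratum construction there and pushing forward through the natural inclusion yields a singular measure supported on that face, with Laplace transform $\Delta^\ast_{-\underline{s}^\ast}$ whose frozen exponents are forced to be $s_k=p_k(\underline{\varepsilon})/2$ by the face dimensions. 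An alternative, which I will cross-check, is analytic continuation in $\underline{s}$: the Gamma-product identity displays simple poles in the $s_k$ whose residues produce exactly these boundary measures.

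For necessity, suppose $\mathcal{R}_{\underline{s}}$ exists. The transformation law of $\Delta^\ast_{-\underline{s}^\ast}$ implies $\rho(T)_\ast\mathcal{R}_{\underline{s}}=\chi_{\underline{s}}(T)\,\mathcal{R}_{\underline{s}}$ for $T\in H_\mathcal{V}$, which is a strong quasi-invariance. Decomposing $\bar\Omega_\mathcal{V}$ into $H_\mathcal{V}$-orbits (faces) and arguing that $\mathcal{R}_{\underline{s}}$ must be supported on a single orbit, the open-stratum analysis applied to the corresponding sub-cone forces the continuous indices to satisfy $s_k>p_k(\underline{\varepsilon})/2$ and the frozen indices to satisfy $s_k=p_k(\underline{\varepsilon})/2$, placing $\underline{s}$ in some $\Xi(\underline{\varepsilon})$. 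The main obstacle will be this necessity step: ruling out hybrid supports and reading off the exact values of $p_k(\underline{\varepsilon})$ requires careful bookkeeping of how the dimensions $\dim\mathcal{V}_{ki}$ contribute to the Jacobian of the triangular parametrization restricted to each boundary face, and matching these contributions to Gindikin's combinatorial prescription.
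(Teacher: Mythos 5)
Note first that the paper does not prove Theorem \ref{riesz} at all: it is quoted from Gindikin and from Ishi's \emph{Positive Riesz distributions on homogeneous cones}, so your proposal has to be judged as an outline of a proof of those results. Your open-stratum sufficiency argument (density $\Delta$-power on $\Omega_{\mathcal V}$, triangular change of variables $x=TT^\top$, factorization into Gamma integrals in the $t_{kk}$ and Gaussian integrals over the $\mathcal V_{lk}$, with convergence exactly at $s_k>p_k(1,\dots,1)/2$) is the standard and correct computation. The genuine gap is in the degenerate strata. You propose to build $\mathcal R_{\underline s}$ for $\underline\varepsilon\neq(1,\dots,1)$ as a measure ``supported on the face'' $\Omega_{\mathcal V^{\underline\varepsilon}}\subset\overline{\Omega_{\mathcal V}}$ obtained from the indices with $\varepsilon_k=1$, pushed forward by the inclusion. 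That cannot work: a measure supported on that face has a Laplace transform depending only on the corresponding sub-block of $\theta$, whereas $\Delta^\ast_{-\underline s^\ast}(\theta)$ involves all coordinates of $\theta$. Already for $\mathcal Z_{\mathcal V}=\mathrm{Sym}(2,\R)$ and $\underline\varepsilon=(1,0)$ (so $s_2=1/2$), the singular Riesz measure is carried by the rank-one matrices $vv^\top$, a two-dimensional set, not by the one-dimensional face $\{\mathrm{diag}(a,0):a\ge 0\}$, and its transform is a negative power of $\det\theta$, which no face-supported measure can produce. The correct support is the closure of the $\rho(H_{\mathcal V})$-orbit $\mathcal O_{\underline\varepsilon}=\rho(H_{\mathcal V})E_{\underline\varepsilon}$, which in general strictly contains the face (it is exactly what the paper uses in the proof of Theorem \ref{hi}, citing Ishi); the singular measures are obtained as relatively invariant measures on these orbits, equivalently as pushforwards of Gaussian-type measures under the quadratic map built from the \emph{full} columns $T_{\cdot i}$ with $\varepsilon_i=1$, and it is this orbit computation (not ``face dimensions'') that produces the frozen values $s_k=p_k(\underline\varepsilon)/2$; your face construction would instead force those exponents to be $0$.

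The necessity half is also essentially missing rather than sketched: you note correctly that a positive measure with transform $\Delta^\ast_{-\underline s^\ast}$ is relatively $\rho(H_{\mathcal V})$-invariant, but the chain ``supported on a single orbit, hence $\underline s\in\Xi(\underline\varepsilon)$'' hides the actual work. One must (a) show the support lies in $\overline{\Omega_{\mathcal V}}$ and hence is a union of closures of the orbits $\mathcal O_{\underline\varepsilon}$, $\underline\varepsilon\in\{0,1\}^r$, ruling out the orbits with $\varepsilon_k=-1$ via divergence of the Laplace integral; (b) for a maximal orbit in the support, show that existence of a relatively $\chi_{\underline s}$-invariant measure on $\mathcal O_{\underline\varepsilon}\cong H_{\mathcal V}/\mathrm{Stab}(E_{\underline\varepsilon})$ forces $s_k=p_k(\underline\varepsilon)/2$ whenever $\varepsilon_k=0$ (a stabilizer/modular-character computation), while finiteness of the transform forces $s_k>p_k(\underline\varepsilon)/2$ when $\varepsilon_k=1$. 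This is precisely the hard content of Ishi's theorem, and acknowledging it as ``the main obstacle'' does not substitute for an argument. As it stands the proposal establishes only the absolutely continuous part of the ``if'' direction.
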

The measure $\mathcal{R}_{\underline{s}}$ with $\underline{s}\in\Xi$ has the support in $\overline{\Omega_\mathcal{V}}$ and is called the \emph{Riesz measure}, while the set $\Xi$ is called the \emph{Gindikin-Wallach set}. 

In order to define NEF generated by $\mathcal{R}_{\underline{s}}$ we have to know if $\mathcal{R}_{\underline{s}}\in\mathcal{M}(\mathcal{Z}_\mathcal{V})$, that is, if $\mathcal{R}_{\underline{s}}$ is not concentrated on some affine hyperplane of $\mathcal{Z}_\mathcal{V}$. The following result is a generalization of \cite[Theorem 3.1]{HaLa01}.
Let us define for $\underline{\varepsilon}\in\{-1,0,1\}^r$,
$$E_{\underline{\varepsilon}}:=\begin{pmatrix}
\varepsilon_1 I_{n_1} & & \\ & \ddots & \\ & & \varepsilon_r I_{n_r}
\end{pmatrix}\in\mathcal{Z}_\mathcal{V}$$

\begin{theorem}\label{hi}
	The support of $\mathcal{R}_{\underline{s}}$ is not concentrated on any affine hyperplane in $\mathcal{Z}_{\mathcal{V}}$ 
	if and only if $s_k >0$ for all $k=1, \dots, r$.
\end{theorem}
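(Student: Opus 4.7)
The plan is to identify $\operatorname{supp}\mathcal{R}_{\underline{s}}$ as an orbit closure under $\rho(H_{\mathcal{V}})$ and then determine when this set fails to lie in any proper affine hyperplane of $\mathcal{Z}_{\mathcal{V}}$. From the transformation rule $\Delta^{\ast}_{\underline{s}}(\rho^{\ast}(S)\xi) = \chi_{\underline{s}^{\ast}}(S)\,\Delta^{\ast}_{\underline{s}}(\xi)$ already recorded in the text, together with uniqueness of Laplace transforms, one obtains $\rho(S)_{\ast}\mathcal{R}_{\underline{s}} = \chi_{-\underline{s}}(S)\,\mathcal{R}_{\underline{s}}$ for every $S\in H_{\mathcal{V}}$; in particular $\operatorname{supp}\mathcal{R}_{\underline{s}}$ is a closed $\rho(H_{\mathcal{V}})$-invariant subset of $\overline{\Omega_{\mathcal{V}}}$. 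Combined with the stratification of Theorem~\ref{riesz} this identifies
$$\operatorname{supp}\mathcal{R}_{\underline{s}} = \overline{\rho(H_{\mathcal{V}})\cdot E_{\underline{\varepsilon}}}\qquad\text{for the unique }\underline{\varepsilon}\in\{0,1\}^{r}\text{ with }\underline{s}\in\Xi(\underline{\varepsilon}).$$

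For the \emph{only if} direction, assume some $s_k=0$. Then $\underline{s}\in\Xi(\underline{\varepsilon})$ forces $\varepsilon_k=0$ and $p_k(\underline{\varepsilon})=0$, that is, $\dim\mathcal{V}_{ki}=0$ for every $i<k$ with $\varepsilon_i=1$. Using the block structure of $T\in H_{\mathcal{V}}$ together with axiom (V3), one directly computes
$$(T\,E_{\underline{\varepsilon}}\,T^{\top})_{kk} = \Bigl(\varepsilon_k t_{kk}^{2} + \sum_{i<k,\,\varepsilon_i=1}(T_{ki}\mid T_{ki})\Bigr)I_{n_k} = 0,$$
so the whole orbit (hence its closure and $\operatorname{supp}\mathcal{R}_{\underline{s}}$) lies in the proper affine hyperplane $\{x\in\mathcal{Z}_{\mathcal{V}}\colon x_{kk}=0\}$.

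For the converse, assume $s_k>0$ for every $k$. Since $\rho(c\,\mathrm{Id})E_{\underline{\varepsilon}}=c^{2}E_{\underline{\varepsilon}}\to 0$ as $c\downarrow 0$, the origin belongs to $\overline{\rho(H_{\mathcal{V}})\cdot E_{\underline{\varepsilon}}}$, so the affine and linear spans of the support coincide; it suffices to show that the linear span $V$ equals $\mathcal{Z}_{\mathcal{V}}$. For $S$ in the Lie algebra $\mathfrak{h}_{\mathcal{V}}$ of $H_{\mathcal{V}}$ and $|t|$ small enough that $I+tS\in H_{\mathcal{V}}$, expanding
$$\rho(I+tS)E_{\underline{\varepsilon}} = E_{\underline{\varepsilon}} + t\bigl(SE_{\underline{\varepsilon}}+E_{\underline{\varepsilon}}S^{\top}\bigr) + t^{2}\,SE_{\underline{\varepsilon}}S^{\top}$$
and polarizing at $t=0,\pm 1$ places both $SE_{\underline{\varepsilon}}+E_{\underline{\varepsilon}}S^{\top}$ and $SE_{\underline{\varepsilon}}S^{\top}$ in $V$. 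Choosing $S$ with a single non-zero block then successively yields: (i) arbitrary off-diagonal generators $A\in\mathcal{V}_{lk}$ at position $(l,k)$ whenever $\varepsilon_k=1$, from the linear term; (ii) pure diagonal generators $cI_{n_k}$ at position $(k,k)$ for every $k$, from diagonal $T$'s if $\varepsilon_k=1$ and from $SE_{\underline{\varepsilon}}S^{\top}$ with $S_{ki}\in\mathcal{V}_{ki}$ if $\varepsilon_k=0$---where the required $i<k$ with $\varepsilon_i=1$ and $\dim\mathcal{V}_{ki}>0$ exists precisely because $s_k=p_k(\underline{\varepsilon})/2>0$; (iii) off-diagonal generators $S_{lp}S_{kp}^{\top}\in\mathcal{V}_{lk}$ at $(l,k)$ when $\varepsilon_k=0$, from $SE_{\underline{\varepsilon}}S^{\top}$ with a suitable $p<k$, $\varepsilon_p=1$, using axiom (V2).

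The principal obstacle is step (iii): one must verify that these bilinear products, ranging over all admissible $p$, exhaust the full $\mathcal{V}_{lk}$ rather than only a proper subspace. I expect this to follow by an iterative argument in which, once the ``lower'' $(k,p)$- and $(l,p)$-blocks are already known to lie in $V$, further application of $\rho(H_{\mathcal{V}})$ combined with axioms (V1)--(V3) propagates the property upward through the block hierarchy; the condition $s_k>0$ for every $k$ serves throughout as precisely the structural input preventing degeneration at any level. This book-keeping---closely analogous to the rank-stratification argument used in \cite{HaLa01} for symmetric cones---constitutes the real technical content of the proof.
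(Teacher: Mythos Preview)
Your ``only if'' direction coincides with the paper's argument.

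For the ``if'' direction the paper takes a completely different and much shorter route, avoiding any orbit-span analysis. If every $s_k>0$, then for a sufficiently large integer $m$ one has $m\underline{s}\in\Xi(1,\ldots,1)$, so that $\mathcal{R}_{m\underline{s}}$ is a regular (absolutely continuous) measure on the open cone $\Omega_{\mathcal{V}}$. Since Laplace transforms multiply under convolution, $\mathcal{R}_{m\underline{s}}$ is the $m$-fold convolution $\mathcal{R}_{\underline{s}}\ast\cdots\ast\mathcal{R}_{\underline{s}}$; but the $m$-fold convolution of a measure supported in an affine hyperplane $a+W$ is supported in $ma+W$, contradicting the regularity of $\mathcal{R}_{m\underline{s}}$. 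No structural information about $H_{\mathcal{V}}$-orbits beyond Theorem~\ref{riesz} is needed.

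Regarding the gap you flag in step (iii): surjectivity of the bilinear map $(S_{lp},S_{kp})\mapsto S_{lp}S_{kp}^{\top}$ onto $\mathcal{V}_{lk}$ is not a formal consequence of (V1)--(V3), and your sketch does not establish it. Your approach can nonetheless be salvaged cheaply once you observe that the linear span $V$ of the orbit is itself $\rho(H_{\mathcal{V}})$-invariant, each $\rho(T)$ being linear. After step (ii) has placed every diagonal idempotent $E_k$ in $V$, apply $\rho(I+tS)$ with a single block $S_{lk}=A\in\mathcal{V}_{lk}$ to $E_k\in V$ (rather than to $E_{\underline{\varepsilon}}$): the linear term in $t$ is precisely the symmetric matrix carrying $A$ in the $(l,k)$ block, with no condition on $\varepsilon_k$ and no iteration required. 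Even so, the paper's convolution argument is the cleaner one.
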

\begin{proof}
	We write $\mathcal{O}_{\underline{\varepsilon}}$ for the $\rho(H_{\mathcal{V}})$-orbit in $\mathcal{Z}_{\mathcal{V}}$ through $E_{\underline{\varepsilon}}$.
	Note that $\mathcal{O}_{(1, \dots, 1)} = \Omega_{\mathcal{V}}$.
	It is shown in \cite[Theorem 6.2]{Is00} that,
	if $\underline{s} \in \Xi(\underline{\varepsilon})$, then $\mathcal{R}_{\underline{s}}$ is a positive measure on $\mathcal{O}_{\underline{\varepsilon}}$,
	so that the support of $\mathcal{R}_{\underline{s}}$ coincides with the closure $\overline{\mathcal{O}_{\underline{\varepsilon}}}$ of $\mathcal{O}_{\underline{\varepsilon}}$.
	In particular, if for any $k=1, \dots, r$,
	\begin{equation} \label{eqn:regular_measure}
		s_k > p_k(1, \dots, 1)/2 = \frac{1}{2} \sum_{i<k} \dim \mathcal{V}_{ki},
	\end{equation}
	then $\underline{s} \in \Xi(1, \dots, 1)$ and
	$\mathcal{R}_{\underline{s}}$ is a regular measure on the cone $\Omega_{\mathcal{V}}=\rho(H_\mathcal{V})I_N$.
	
	Now we show the `if' part of the statement.
	Assume that $\underline{s} \in \Xi \cap \R^r_{>0}$.
	In view of (\ref{eqn:regular_measure}),
	we see that there exists a positive integer $m$ such that $m \underline{s} \in \Xi(1, \dots, 1)$.
	Then $\mathcal{R}_{m \underline{s}}$ is a regular measure,
	while $\mathcal{R}_{m \underline{s}}$ equals the convolution measure $\mathcal{R}_{\underline{s}} * \mathcal{R}_{\underline{s}} * \dots * \mathcal{R}_{\underline{s}}$ ($m$ times).
	It follows that the support of $\mathcal{R}_{\underline{s}}$ is not concentrated on any affine hyperplane in $\mathcal{Z}_{\mathcal{V}}$.
	
	Next we show the `only if' part. 
	It suffices to show that,
	if $\underline{s} \in \Xi(\underline{\varepsilon})$ with $s_k = 0$ for some $k$,
	then $\mathrm{supp}\,\mathcal{R}_{\underline{s}} = \overline{\mathcal{O}_{\underline{\varepsilon}}}$ is contained in 
	the subspace $(\R E_k)^{\perp} := \{ x \in \mathcal{Z}_{\mathcal{V}} \colon x_{kk} = 0\}$ of $\mathcal{Z}_{\mathcal{V}}$.
	Recalling the definition of $\Xi(\underline{\varepsilon})$, we see that
	$$
	\varepsilon_k = 0 \quad\mbox{ and }\quad p_k(\underline{\varepsilon}) = \sum_{i<k} \varepsilon_i \dim \mathcal{V}_{ki} = 0,
	$$ 
	and the latter equality implies
	$$
	\mathcal{V}_{ki} = {\{0\}} \mbox{ if }\varepsilon_i = 1.
	$$
	Therefore,
	for any $x = \rho(T) E_{\underline{\varepsilon}} = T E_{\underline{\varepsilon}} T^{\top} \in \mathcal{O}_{{\underline{\varepsilon}}}$ with $T \in H_{\mathcal{V}}$, 
	we have
	$$
	x_{kk} = \varepsilon_k (t_{kk})^2 + \sum_{i<k} \varepsilon_i \Vert T_{ki} \Vert^2 = 0,
	$$
	which means that $\mathcal{O}_{\underline{\varepsilon}} \subset (\R E_k)^{\perp}$.
	Hence $\mathrm{supp}\,\mathcal{R}_{\underline{s}} \subset (\R E_k)^{\perp}$ and the proof is completed.   
\end{proof}

For $\underline{\varepsilon}\in\{-1,1\}^r$, consider
$\mathcal{O}^\ast_{\underline{\varepsilon}}:=\rho^\ast(H_{\mathcal{V}})E_{\underline{\varepsilon}}$.
The set
\begin{align}\label{setd}
	\bigsqcup_{\underline{\varepsilon}\in\{-1,1\}^r} \mathcal{O}^\ast_{\underline{\varepsilon}}
\end{align}
is dense in $\mathcal{Z}_\mathcal{V}$ and $\mathcal{O}^\ast_{\underline{\varepsilon}}$ are the only open orbits of $\rho^\ast(H_{\mathcal{V}})$ (see 
{\cite[p.~77]{Gi64}}).

\subsection{Characterization of the Riesz measure on homogeneous cone}\label{Sec52}
In the following section we will give an application of Theorem \ref{th2} to a characterization of the Riesz measure on homogeneous cone. We generalize the results of \cite{HaLa01}, where the characterization of Riesz measure through invariance property of NEF on simple Euclidean algebras was considered. 

We say that the subalgebra $\mathcal{Z}$ of $(\mathrm{Sym}(N,\R),\bigtriangleup)$ is \emph{irreducible} if $\mathcal{Z}$ is not equal to a direct sum of two non-trivial ideals. 
\begin{theorem}\label{TH35}
	Let $\E=\mathcal{Z}_\mathcal{V}$ be {an irreducible subalgebra}{ of $(\mathrm{Sym}(N,\R),\bigtriangleup)$ that admits a normal block decomposition} and {let $\mu\in\mathcal{M}(\E)$}. Assume that $F(\mu)$ {is} a NEF invariant by $G=\rho(H_\mathcal{V})$.
	Then there exist $\theta_0\in\mathcal{Z}_\mathcal{V}$, {$a_0 \in \R$} and $\underline{s}\in\Xi \cap \R^r_{>0}$ such that
	$$\mu(dx)=e^{{a_0} -\scalar{\theta_0,x}} \mathcal{R}_{\underline{s}}(dx)$$
	or 
	$$\mu(dx)=e^{{a_0}- \scalar{\theta_0,x}} \mathcal{R}_{\underline{s}}(-dx)$$
\end{theorem}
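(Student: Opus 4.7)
\medskip

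\noindent\textbf{Proof plan.} The strategy is to apply Theorem~\ref{th2} to reduce to a $G$-invariant measure $\mu_0$, and then identify $\mu_0$ (up to a positive scalar) with $\mathcal{R}_{\underline{s}}$ or its reflection through the Laplace transform and the characterization of $\Delta^\ast$ by its character property.

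First, I would check the hypothesis of Theorem~\ref{th2}: for every $\lambda>0$ one has $\lambda I_N\in H_{\mathcal{V}}$, and $\rho(\lambda I_N)x=\lambda^2 x$, so $G=\rho(H_{\mathcal{V}})$ contains $c\,\mathrm{Id}$ with $c=\lambda^2\neq 1$ whenever $\lambda\neq 1$. Theorem~\ref{th2} then produces $\theta_0\in\mathcal{Z}_{\mathcal{V}}$ and a $G$-invariant measure $\mu_0$ with $\mu(dx)=e^{-\scalar{\theta_0,x}}\mu_0(dx)$; the constant $a_0$ in the statement will appear when $\mu_0$ is normalized against a Riesz measure. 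By \eqref{iL}, for $T\in H_{\mathcal{V}}$ one has $L_{\mu_0}(\rho^\ast(T)\xi)=c_T^{-1}L_{\mu_0}(\xi)$ on $\Theta(\mu_0)$, and since $T\mapsto c_T$ is a positive character of $H_{\mathcal{V}}$ there is a unique $\underline{s}\in\R^r$ with $c_T=\chi_{\underline{s}^\ast}(T)$. In particular $\Theta(\mu_0)$ is open, convex, and $\rho^\ast(H_{\mathcal{V}})$-invariant; as it is open it meets the dense union \eqref{setd}, and by $G$-invariance it entirely contains some open orbit $\mathcal{O}^\ast_{\underline{\varepsilon}_0}$.

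The key step, and the one I expect to be the main obstacle, is to show that irreducibility of $\mathcal{Z}_{\mathcal{V}}$ forces $\underline{\varepsilon}_0=(1,\dots,1)$ or $\underline{\varepsilon}_0=(-1,\dots,-1)$. The plan here is to exploit the fact that $\Theta(\mu_0)$ is a proper open convex cone: if $\underline{\varepsilon}_0$ had both a $+1$ and a $-1$ entry, one could use the $\rho^\ast(H_{\mathcal{V}})$-action together with the $V_1$--$V_3$ structural conditions to exhibit two points of $\mathcal{O}^\ast_{\underline{\varepsilon}_0}$ whose midpoint lies outside the closure of $\mathcal{O}^\ast_{\underline{\varepsilon}_0}\cup\mathcal{O}^\ast_{-\underline{\varepsilon}_0}$, and irreducibility (no nontrivial ideal decomposition of $\mathcal{Z}_{\mathcal{V}}$) prevents the resulting $G$-invariant convex hull from staying a proper cone—one would obtain $\Theta(\mu_0)=\mathcal{Z}_{\mathcal{V}}$, contradicting $L_{\mu_0}$ being finite on a convex set on which it transforms by an unbounded character.

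Once $\underline{\varepsilon}_0=\pm(1,\dots,1)$, say $\Theta(\mu_0)\supset\Omega^\ast_{\mathcal{V}}$, the functional equation $L_{\mu_0}(\rho^\ast(T)\xi)=\chi_{\underline{s}^\ast}(T)^{-1}L_{\mu_0}(\xi)$ combined with the simple transitivity of $\rho^\ast(H_{\mathcal{V}})$ on $\Omega^\ast_{\mathcal{V}}$ and the characterization of $\Delta^\ast$ noted after its definition forces $L_{\mu_0}(\xi)=A\,\Delta^\ast_{-\underline{s}^\ast}(\xi)$ on $\Omega^\ast_{\mathcal{V}}$ for some $A>0$. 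Comparing with Theorem~\ref{riesz} gives $\underline{s}\in\Xi$ and $\mu_0=A\,\mathcal{R}_{\underline{s}}$. Finally, because $\mu\in\mathcal{M}(\mathcal{Z}_{\mathcal{V}})$ the measure $\mu_0$ is not concentrated on any affine hyperplane, so Theorem~\ref{hi} upgrades this to $\underline{s}\in\Xi\cap\R^r_{>0}$; setting $a_0=\log A$ and combining with the first step gives the claimed formula. The alternative $\Theta(\mu_0)\supset-\Omega^\ast_{\mathcal{V}}$ is treated identically after the change of variable $\xi\mapsto-\xi$ and produces the reflected measure $\mathcal{R}_{\underline{s}}(-dx)$.
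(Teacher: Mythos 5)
Your overall skeleton is the same as the paper's: the reduction via Theorem \ref{th2} (using $\rho(\lambda I_N)=\lambda^2\,\mathrm{Id}$), the functional equation $L_{\mu_0}(\rho^\ast(T)\theta)=\chi(T)L_{\mu_0}(\theta)$ with $\chi$ a character of $H_\mathcal{V}$, the observation that the open set $\Theta(\mu_0)$ must contain a full open orbit $\mathcal{O}^\ast_{\underline{\varepsilon}}$ because the union \eqref{setd} is dense, and the final identification of $L_{\mu_0}$ with $e^{a_0}\Delta^\ast_{\underline{s}^\ast}$ on $\pm\Omega^\ast_\mathcal{V}$ via simple transitivity and Theorems \ref{riesz} and \ref{hi} are all correct and essentially as in the paper. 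The problem is the middle step, which you yourself flag as the main obstacle but then only gesture at, and your gesture points in a direction that does not work as stated.

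Concretely, two things are missing or wrong. First, the construction of two points of $\mathcal{O}^\ast_{\underline{\varepsilon}}$ whose midpoint leaves the orbit is the technical core of the theorem, and ``one could use the $V_1$--$V_3$ conditions'' is not a proof: the paper takes $v\in\mathcal{V}_{lk}$ with $(v|v)=2$, where $k$ has $\varepsilon_k=1$, $l$ has $\varepsilon_l=-1$ and $\mathcal{V}_{lk}\neq 0$, and computes $\tfrac12\bigl(\rho^\ast(T(v))E_{\underline{\varepsilon}}+\rho^\ast(T(-v))E_{\underline{\varepsilon}}\bigr)=E_{\underline{\varepsilon}'}$ with $\varepsilon'_k=-1$; irreducibility is used exactly here, to guarantee that such a nonzero connecting block $\mathcal{V}_{lk}$ exists (otherwise $\mathcal{Z}_\mathcal{V}$ splits as a direct sum and mixed-sign $\Theta(\mu_0)$ genuinely occurs, e.g.\ for products), not to show that some invariant convex hull fails to be a proper cone. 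Second, your proposed contradiction collapses: if the convex hull argument gave $\Theta(\mu_0)=\mathcal{Z}_\mathcal{V}$, then $0\in\Theta(\mu_0)$ and putting $\theta=0$ in the functional equation forces $\chi\equiv 1$, so there is no ``unbounded character'' to contradict; to rule out this case you would still need the non-degeneracy of $\mu_0$ (a finite measure strictly invariant under the dilations $\rho(\lambda I_N)$ must be a point mass at $0$), and in any case the claim that the convex hull of a mixed-signature orbit is all of $\mathcal{Z}_\mathcal{V}$ is left unproven. The paper circumvents all of this with a uniqueness lemma you omit: at most one $E_{\underline{\varepsilon}}$, $\underline{\varepsilon}\in\{-1,1\}^r$, can lie in $\Theta(\mu_0)$, because the midpoint of two distinct ones is fixed by a nontrivial diagonal subgroup $H_0$, forcing $\chi|_{H_0}=1$ and hence $\mathrm{supp}\,\mu_0$ into the proper subspace where the $(i,i)$-entries, $i\in I_0$, vanish, contradicting $\mu_0\in\mathcal{M}(\mathcal{Z}_\mathcal{V})$; consequently $\Theta(\mu_0)$ is exactly one open orbit, and the midpoint identity above immediately yields the contradiction when the signs are mixed. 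Note that this is a second, essential use of the non-degeneracy hypothesis in the middle of the proof, whereas your outline invokes it only at the end through Theorem \ref{hi}.
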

Thanks to Theorem \ref{TH31}, Theorem \ref{TH35} provides a characterization of Riesz measures on homogeneous cones (note that the support of characterized measure is a homogeneous cone). 

\begin{proof}
	We have $\R_{>0}I_N\subset H_\mathcal{V}$ and so $c\,\mathrm{Id}\in\rho(H_\mathcal{V})$ for all $c>0$.
	Theorem \ref{th2} implies that there exists $\theta_0$ such that
	$$\mu(dx)=e^{-\scalar{\theta_0,x}}\mu_0(dx),$$
	where {(by \eqref{iL})}
	\begin{align}\label{eqLap}
		L_{\mu_0}(\rho^\ast(T)\theta)=\chi(T)L_{\mu_0}(\theta),\quad (\theta,T)\in \Theta(\mu_0)\times H_\mathcal{V}
	\end{align}
	{with a certain character $\chi$ of $H_\mathcal{V}$.}
	We will determine $L_{\mu_0}$ and $\Theta(\mu_0)$. The proof is split into four steps:
	\begin{itemize}
		\item[(i)] There exists $\underline{\varepsilon}\in\{-1,1\}^r$ such that $E_{\underline{\varepsilon}}\in \Theta(\mu_0)$. Moreover, if $E_{\underline{\varepsilon}},E_{\underline{\varepsilon}'}\in \Theta(\mu_0)$, then $\underline{\varepsilon}=\underline{\varepsilon}'$.
		\item[(ii)] $\Theta(\mu_0)=\mathcal{O}^\ast_{\underline{\varepsilon}}$ for some $\underline{\varepsilon}\in\{-1,1\}^r$.
		\item[(iii)] If $\Theta(\mu_0)=\mathcal{O}^\ast_{\underline{\varepsilon}}$ then $\underline{\varepsilon}=(-1,\ldots,-1)$ or $\underline{\varepsilon}=(1,\ldots,1)$.
		\item[({iv})] $\mu$ is of the postulated form.
	\end{itemize}
	\emph{First step.} By definition, the set $\Theta(\mu_0)$ is open and non-empty. Since $\rho^\ast(T)\Theta(\mu_0)=\Theta(\mu_0)$ for any $T\in H_\mathcal{V}$ and the set \eqref{setd} is dense in $\mathcal{Z}_\mathcal{V}$, we have $E_{\underline{\varepsilon}}\in\mathcal{O}^\ast_{\underline{\varepsilon}}\subset \Theta(\mu_0)$ for some $\underline{\varepsilon}\in\{-1,1\}^r$. \\
	Assume now that $E_{\underline{\varepsilon}}, E_{\underline{\varepsilon}'}\in \Theta(\mu_0)$ and $\underline{\varepsilon}\neq\underline{\varepsilon}'$.
	Since $\Theta(\mu_0)$ is convex, we know that 
	$\sigma:=\frac12(E_{\underline{\varepsilon}}+E_{\underline{\varepsilon'}})\in\Theta(\mu_0)$. Define $I_0:=\{i\in\{1,\ldots,r\}\colon \frac{\varepsilon_i+\varepsilon_i'}{2}=0\}$. The set $I_0$ is not empty.
	Define 
	$$H_0:=\left\{T\in H_\mathcal{V}\colon t_{ii}=1\mbox{ for any }i\notin I_0 \right\}$$
	If $T\in H_0$ is diagonal, then 
	$$\rho^\ast(T)\sigma=\sigma.$$
	On the other hand, by \eqref{eqLap} we obtain
	$$L_{\mu_0}(\rho^\ast(T)\sigma)=\chi(T)L_{\mu_0}(\sigma),$$
	which implies that $\chi(T)=1$ for any $T\in H_0$ (note that $\chi(T)$ depends on $T$ only through diagonal elements{; see Lemma \ref{lemREP}}).
	Further, this implies that for any {diagonal} $T=\mathrm{diag}(t_{11},\ldots,t_{rr})\in H_0$, 
	$$L_{\mu_0}(\rho^\ast(T) E_{\underline{\varepsilon}})=L_{\mu_0}(E_{\underline{\varepsilon}}),$$
	{where $\rho^\ast(T) E_{\underline{\varepsilon}}=\mathrm{diag}\left(t_{11}\varepsilon_1,\ldots,t_{rr}\varepsilon_r\right)$.}
	{T}his contradicts the assumption that $\mu_0$ is not supported on any affine hyperplane of $\mathcal{Z}_\mathcal{V}$. 
	Indeed, {function $(0,\infty)\ni t_{ii}\mapsto \int \exp(t_{ii}\varepsilon_ix_{ii})\mu_0(dx)$ is constant if and only if the support of $\mu_0$ is contained in the subspace of $\mathcal{Z}_\mathcal{V}$ whose $(i,i)$-components are zero for any $i\in I_0$.
	}
	
	\emph{Second step.} Since $\Theta(\mu_0)$ contains only one open orbit, we have
	$$\mathrm{int}\{\Theta(\mu_0)\setminus \mathcal{O}^\ast_{\underline{\varepsilon}}\}=\emptyset.$$
	This implies that
	$$\Theta(\mu_0)\setminus \overline{\mathcal{O}^\ast_{\underline{\varepsilon}}}=\emptyset,$$
	thus
	$$\mathcal{O}^\ast_{\underline{\varepsilon}}\subset \Theta(\mu_0)\subset \overline{\mathcal{O}^\ast_{\underline{\varepsilon}}}$$
	which proves the claim, since $\Theta(\mu_0)$ is open.
	
	\emph{Third step.} We will show that $\mathcal{O}^\ast_{\underline{\varepsilon}}$ is not convex unless all $\varepsilon_k$, $1\leq k\leq r$, are simultaneously $1$ or $-1$.
	Define
	$$I^+(\underline{\varepsilon}):=\{i\in\{1,\ldots,r\}\colon \varepsilon_i=1\}\,\mbox{ and }\,I^-(\underline{\varepsilon}):=\{1,\ldots,r\}\setminus I^+(\underline{\varepsilon})$$
	Suppose that $I^+(\underline{\varepsilon})$ and $I^-(\underline{\varepsilon})$ are non-empty. 
	Then there exists $k\in I^+(\underline{\varepsilon})$ and $l\in I^-(\underline{\varepsilon})$ such that the space $\mathcal{V}_{lk}$ is not {$\{0\}$} (without loss of generality we may assume that $l>k$). If not,  $\mathcal{Z}_\mathcal{V}$ has the block form
	$$w\left(\begin{array}{c|c}
	\mathcal{Z}_+ & 0 \\
	\hline
	0 & \mathcal{Z}_-
	\end{array}\right)w^\top,
	$$
	for some permutation matrix $w$.
	This contradicts the assumption that $\mathcal{Z}_\mathcal{V}$ is irreducible. 
	
	Thus there exists $\mathcal{V}_{lk}\ne {\{0\}}$. 
	We have $\varepsilon_k=1$ and $\varepsilon_l=-1$. For $v\in \mathcal{V}_{lk}$ let $T(v)$ be the element of $H_{\mathcal{V}}$ such that $T_{lk}{(v)}=v$, $t_{ii}{(v)}=1$ for $i=1,\ldots r$ and $T_{ji}{(v)}=0$ for all $(i,j)\neq (k,l)$, $1\leq i<j\leq r$. Take any $v\in\mathcal{V}_{lk}$ with $(v|v)=2$. Then
	\begin{align}\label{nonc}
		\frac12\left(\rho^\ast(T(v)){E_{\underline{\varepsilon}}}+\rho^\ast(T(-v))
		{E_{\underline{\varepsilon}}}\right)=E_{\underline{\varepsilon}'},
	\end{align}
	where $\varepsilon'_i=\varepsilon_i$ for $i\neq k$ and $\varepsilon'_k=-1$. 
	We will use the definition of $\rho^\ast(T)$, {but there is other natural approach; see Remark \ref{refnew} after the proof}. For any $x\in\mathcal{Z}_\mathcal{V}$ consider the matrix
	$$x_v:=\frac{1}{2}\left(\rho(T(v))x+\rho(T(-v))x\right)=\frac{1}{2}\left(T(v)xT(v)^\top+{T(-v)}xT(-v)^\top\right).$$
	It may be verified by direct calculation that matrices $x$ and $x_v$ differ only on their $(l,l)$-components, which in the latter case equals
	$$( v v^\top x_{kk} + x_{ll})I_{n_l}=(2x_{kk} + x_{ll})I_{n_l}.$$
	Thus,
	\begin{align*}
		\scalar{\frac12\left(\rho^\ast(T(v)) {E_{\underline{\varepsilon}}}+\rho^\ast(T(-v)){E_{\underline{\varepsilon}}} \right){, x}}=\scalar{{E_{\underline{\varepsilon}},} \frac{1}{2}\left(\rho(T(v))x+\rho(T(-v))x\right) } \\
		= \sum_{i\notin \{k,l\}} x_{ii}\varepsilon_i+{x_{kk}}+(2x_{kk} + x_{ll})(-1)=\scalar{E_{\underline{\varepsilon}'}{, x}}.
	\end{align*}
	But $\Theta(\mu_0)$ is convex, thus {\eqref{nonc} implies that} $E_{\underline{\varepsilon}'}\in\Theta(\mu_0)$. This contradicts the point $\mathrm{(i)}$.
	This means that {$I^+(\underline{\varepsilon})=\{1,\ldots,r\}$ or $I^-(\underline{\varepsilon})=\{1,\ldots,r\}$} and so $-I_N$ or $I_N$ belongs to $\Theta(\mu_0)${. F}inally, {by \rm{(ii)} and the fact that $\rho^\ast(H_\mathcal{V})I_N=\Omega^\ast_{\mathcal{V}}$},
	$$\Theta(\mu_0)=-\Omega^\ast_\mathcal{V}\quad\mbox{ or }\quad\Theta(\mu_0)=\Omega^\ast_\mathcal{V}.$$
	
	\emph{Fourth step.}
	Let us first consider the case $\Theta(\mu_0)=-\Omega^\ast_\mathcal{V}$. Putting $\theta=\rho^\ast(S)(-I_N)$ for $S\in H_\mathcal{V}$, we obtain
	\begin{align*}
		L_{\mu_0}(-\rho^\ast(ST) I_N)=\chi(T)L_{\mu_0}(-\rho^\ast(S)I_N),\quad (S,T)\in  H_{\mathcal{V}}^2,
	\end{align*}
	which implies that for ${a_0= \log L_{\mu_0}(-I_N)}$,
	$$L_{\mu_0}(- \rho^\ast(T)I_N)={e^{a_0}}\chi(T)$$
	and $\chi(T)$ is a one-dimensional representation of  $H_\mathcal{V}$. Thus{, by Lemma \ref{lemREP},} there exists $\underline{s}\in \Xi$ such that
	$\chi=\chi_{-\underline{s}}$ and then{, by the definition of $\Delta^\ast_{\underline{s}^\ast}$,}
	$$L_{\mu_0}(-\theta)={e^{a_0}}\Delta^\ast_{{-}\underline{s}^\ast}(\theta),\quad \theta\in\Omega^\ast_\mathcal{V}{=-\Theta(\mu_0)}.$$
	By Theorems \ref{riesz} and \ref{hi} we see that $\underline{s}\in\Xi \cap \R^r_{>0}${ and }that $\mu_0(dx)={e^{a_0}}\mathcal{R}_{\underline{s}}(dx)$ and 
	$\Theta(\mu_0)=-\Omega^\ast_\mathcal{V}$.
	
	In the second case, when $\Theta(\mu_0)=\Omega^\ast_\mathcal{V}$, one show{s similarly} that $\mu_0(dx)={e^{a_0}}\mathcal{R}_{\underline{s}}(-dx)$. 
\end{proof}

\begin{remark}\label{refnew}
	{To show \eqref{nonc} we could switch to a matrix realization of the dual cone $\Omega_\mathcal{V}^\ast$, where (under suitable linear isomorphism) $\rho^\ast(T)$ is just a multiplication by some upper triangular matrix on the left and its transpose on the right.}
\end{remark}

\subsection{Comments}
\begin{enumerate}
	\item In \cite{HaLa04}, the authors considered characterization of the Riesz measure $\mathcal{R}_{\underline{s}}$ through the invariance property of NEF on a simple Euclidean algebra $\E$ by some subgroup $G$ of $GL(\E)$. This subgroup was carefully chosen in order to ensure that some components of vector $\underline{s}\in\Xi$ are equal. Taking ordinary triangular group $\rho(H_\mathcal{V})$ imposes no additional conditions on these components. On the other hand if one considers the invariance property of NEF by the connected component containing identity of $G(\Omega)$ (this is in principle what Letac did on symmetric matrices in \cite{Le89}, but it is true on homogeneous cones also; see Comment {\rm (4)} below), then all $s_i$ {have} to be equal, $s_i=p$ for $1\leq i\leq r$. Then $\Delta^\ast_{\underline{s}^\ast}=\det^p$ and $p$ belongs to the set $\Lambda$ called the Jorgensen set (see \cite{CaLe94}).
	\item Elements of $F({\mathcal{R}_{\underline{s}}})$ are actually the Wishart distributions on homogeneous cones introduced in \cite{AW04} (the subcase of $s_i=p$ for $1\leq i\leq r$) and in \cite{GI14}.
	\item It should be stressed that our approach is very different {from} \cite{HaLa01,Le89}, where the characterization of NEF was proved by showing that the variance function of $\rho(H_\mathcal{V})$-invariant NEF coincides with the one of $F({\mathcal{R}_{\underline{s}}})$ for some $\underline{s}\in\Xi$. {In the present paper} we {do not} even need to know what is the variance function of the Riesz measure on homogeneous cones. We perceive our approach as less technical and more natural.
	\item {We can rephrase our result in terms of theory of homogeneous cones as follows. Let $\Omega \subset \E$ be an irreducible homogeneous cone, and $G \subset GL(\E)$ a linear algebraic group acting on $\Omega$ transitively. Then we conclude that any $G$-invariant NEF is generated by a Riesz distributions on $\Omega$ or $-\Omega$ because $G$ contains a triangular subgroup isomorphic to the group $\rho(H_{\mathcal{V}})$ discussed in the present paper (cf. \cite[Chapter 1, Section 9]{Vi63}).}  
\end{enumerate}

\subsection*{Acknowledgements} 
We would like to thank the anonymous referee for helpful comments.
B. Ko{\l}odziejek was partially supported by NCN Grant No. 2012/05/B/ST1/00554.
{H. Ishi was partially supported by JSPS KAKENHI Grant Number 16K05174 and JST PRESTO.}


\normalsize

\end{document}